\theoremstyle{plain}
\newtheorem{theorem}{Theorem}
\newtheorem{lemma}{Lemma}
\theoremstyle{remark}
\newtheorem{remark}{Remark}
\title{Perfect powers in products of terms of elliptic divisibility sequences}
\author{Lajos Hajdu}
\address{Institute of Mathematics\\University of Debrecen\\ P.O. Box 400.\\H-4002 Debrecen\\ Hungary}
\email{hajdul@science.unideb.hu}
\author{Shanta Laishram}
\address{Stat-Math Unit, Indian Statistical Institute\\
7, S. J. S. Sansanwal Marg, New Delhi, 110016, India}
\email{shanta@isid.ac.in}
\author{M\'arton Szikszai}
\address{Institute of Mathematics\\University of Debrecen\\ P.O. Box 400.\\H-4002 Debrecen\\ Hungary}
\email{szikszai.marton@science.unideb.hu}
\subjclass[2010]{primary 11D99; secondary 11B37}
\keywords{perfect powers in products, elliptic divisibility sequence}
\begin{document}
\thanks{L. Hajdu was supported in part by the OTKA grant K115479.}
\thanks{S. Laishram was supported by in parts by INSA, India, HAS, Hungary and FWF (Austrian Science Fund) grant No. P24574.}
\begin{abstract}
Diophantine problems involving recurrence sequences have a long history and is an actively studied topic within number theory. In this paper, we connect to the field by considering the equation
\[
B_mB_{m+d}\dots B_{m+(k-1)d}=y^\ell
\]
in positive integers $m,d,k,y$ with $\gcd(m,d)=1$ and $k\geq 2$, where $\ell\geq 2$ is a fixed integer and $B=(B_n)_{n=1}^\infty$ is an elliptic divisibility sequence, an important class of non-linear recurrences. We prove that the above equation admits only finitely many solutions. In fact, we present an algorithm to find all possible solutions, provided that the set of $\ell$-th powers in $B$ is given. (Note that this set is known to be finite.) We illustrate our method by an example.
\end{abstract}

\maketitle

\section{Introduction}
Finding perfect powers among the terms or the products of terms of recurrence sequences is a classical Diophantine problem. The case of linear recurrences has a vast literature already. We only mention several important results, without going into details. Peth\H{o} \cite{petho} and independently Shorey and Stewart \cite{shoreystewart} showed that any non-degenerate binary recurrence can admit only finitely many perfect powers and their sizes are effectively bounded. Further, in case when a general linear recurrence of order $k$ has a so-called dominant root, Shorey and Stewart \cite{shoreystewart} proved that the sequence cannot contain a $q$-th power if $q$ is large enough. These results, together with other general theorems concerning the perfect powers among the terms (see e.g. the book of Shorey and Tijdeman \cite{shoreytijdeman} and the references there) suggest that the effective determination of perfect power terms is possible, at least in principle. However, listing all of them for an individual sequence is a highly non-trivial problem. For instance, it was just recently that Bugeaud, Mignotte and Siksek \cite{bugeaudmignottesiksek}, applying modular techniques, came up with a result that gives all perfect powers in the sequences of Fibonacci and Lucas numbers. Note that these are the most basic examples of binary recurrences. For perfect powers in products of terms, the situation is roughly the same. Results for certain infinite families of sequences promise effective determination of all solutions, but usually the bounds are so high that explicit computation cannot be carried out. Concerning the general setting, we mention the paper of Luca and Shorey \cite{lucashorey}, where they gave an effective upper bound for the size of the solutions to the equation when a product of terms from a Lucas sequence or from its companion sequence equals a perfect power. In case of individual recurrences, we refer to Bravo, Das, Guzm\'an and Laishram \cite{bravodasguzmanshanta} who considered the previously mentioned equations with the Pell and Pell-Lucas sequences, listing all solutions. Their proofs also provide a method for Lucas and their companion sequences, in general. For more details on these topics, we point the reader to the above mentioned papers and the references given there.

It is natural to investigate analogous problems for non-linear recurrences. One of the classical and most studied family of such recurrences is given by the elliptic divisibility sequences. The notion of elliptic divisibility sequence was introduced by Ward \cite{ward} as a class of non-linear recurrences satisfying certain arithmetic properties. It is important to note that some special cases of his definition give back Lucas sequences. We follow Silverman \cite{silverman}, whose definition is a conventional and widely used one. Take an elliptic curve $E$ over $\mathbb{Q}$ and a point $P\in E(\mathbb{Q})$ of infinite order. We can write the multiples of $P$ as
\[
nP=\left(\dfrac{A_n}{B_n^2},\dfrac{C_n}{B_n^3}\right)
\]
with integers $A_n,B_n,C_n$ such that $\gcd(A_nC_n,B_n)=1$ and $B_n>0$. (Note that the assumption $B_n>0$ is made only for convenience.) The sequence $B=(B_n)_{n=1}^\infty$ is called an elliptic divisibility sequence. Due to their relation with elliptic curves and various applications, such sequences have attracted increased attention for the last few decades. For example, Shipsey \cite{shipsey} and Swart \cite{swart} established connections between elliptic divisibility sequences and the elliptic curve discrete logarithm problem, while Stange \cite{stange} applied them and their generalizations, the so-called elliptic nets, in the computation of the Weil and Tate pairings. As an exotic application, Poonen \cite{poonen} used them to prove the undecidability of Hilbert's tenth problem over certain rings of integers. In this paper, we are interested in a Diophantine problem concerning perfect powers represented as products of terms of elliptic divisibility sequences.

Questions about finiteness and effective determination of perfect powers among the terms of elliptic divisibility sequences themselves have already been considered by several authors and various results appeared in this direction. Let us take an elliptic divisibility sequence $B=(B_n)_{n=1}^\infty$, an integer $\ell\geq 2$ and introduce the notation
\[
\mathcal{P}_\ell(B)=\{i:B_i\ \textnormal{is an $\ell$-th power}\}.
\]
For later use, also set
$$
N_\ell=|\mathcal{P}_\ell(B)|\ \ \ \text{and}\ \ \ M_\ell=\max\limits_{i\in \mathcal{P}_\ell(B)} i.
$$
Everest, Reynolds and Stevens \cite{everestreynoldsstevens} showed  finiteness for the set $\mathcal{P}_\ell(B)$, however, their proof is ineffective and hence does not give an upper bound for the size of its elements. Further, they noted that under the assumption of the $abc$-conjecture one can let the exponent $\ell$ vary and prove finiteness for the set of all perfect powers in the sequence. As in the case of linear recurrences, listing the elements of $\mathcal{P}_\ell(B)$ is a highly non-trivial problem. A  paper of Reynolds \cite{reynolds} explains a procedure to find every perfect power in the sequence when $B_1$ is divisible by $2$ or $3$. There are more explicit results for square and cube terms by Bizim and Gezer \cite{bizimgezer1, bizimgezer2}. (Note that their definition of elliptic divisibility sequence differs from ours, since it involves a torsion point rather than a point of infinite order.)

Let $B=(B_n)_{n=1}^\infty$ be an elliptic divisibility sequence such that $B_1=1$ and $\ell\geq 2$ is fixed. We will point out later in the Introduction that $B_1=1$ is unnecessary, but makes the presentation smoother. Consider the diophantine equation
\begin{equation}
\label{eq1}
B_{m}B_{m+d}\dots B_{m+(k-1)d}=y^\ell
\end{equation}
in positive integers $m,d,k,y$ with $k\geq 2$ and $\gcd(m,d)=1$. We prove that \eqref{eq1} admits only finitely many solutions. Further, we bound $m,d,k,y$ in terms of $N_\ell$ and $M_\ell$. In fact, our method provides an algorithm to find all the solutions to equation \eqref{eq1}, whenever ${\mathcal P}_\ell(B)$ is given explicitly.

\begin{theorem}
\label{theorem1}
Let $\ell\geq 2$ be a fixed integer. Then, equation \eqref{eq1} has only finitely many solutions. Further, there exists an effectively computable constant $c_1(N_\ell,M_\ell)$ depending only on $N_\ell$ and $M_\ell$ such that $\max(m,d,k,y)<c_1(N_\ell,M_\ell)$. In particular, if $\mathcal{P}_\ell(B)$ is given then all solutions to \eqref{eq1} can be effectively determined.
\end{theorem}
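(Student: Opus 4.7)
The plan is to combine the strong divisibility property $\gcd(B_a,B_b)=B_{\gcd(a,b)}$ of elliptic divisibility sequences with a dichotomy of ``good'' versus ``bad'' indices and a Sylvester--Erd\H{o}s type control of primes in arithmetic progressions, and then to use the quadratic growth of $\log B_n$ to turn index bounds into a bound on $y$.

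Since $\gcd(m,d)=1$ forces $\gcd(m+id,d)=1$ for every $i$, one has $\gcd(B_{m+id},B_{m+jd})=B_{\gcd(m+id,j-i)}$, which divides $B_1\cdots B_{k-1}$. I would decompose each factor as $B_{m+id}=s_it_i$, where $t_i$ is composed of the prime powers whose underlying primes divide no other $B_{m+jd}$. The $t_i$'s are then pairwise coprime and each is coprime to every $s_j$ with $j\ne i$, so the identity $\prod_i s_it_i=y^\ell$ forces each $t_i$ to be an $\ell$-th power. Call $i$ \emph{good} when $s_i=1$: then $B_{m+id}$ is itself an $\ell$-th power, hence $m+id\in\mathcal{P}_\ell(B)$, which bounds it by $M_\ell$ and the number of good indices by $N_\ell$. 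A convenient sufficient condition for goodness is $\gcd(m+id,(k-1)!)=1$, because then $\gcd(m+id,j-i)=1$ and hence $\gcd(B_{m+id},B_{m+jd})=B_1=1$ for every $j\ne i$. Contrapositively, every bad index has $m+id$ divisible by some prime at most $k-1$, and in particular no prime in the progression $\{m,m+d,\dots,m+(k-1)d\}$ can exceed $\max(k-1,M_\ell)$.

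This restriction is strong: a Sylvester--Erd\H{o}s--Shorey--Tijdeman style result on prime divisors of the product $m(m+d)\cdots(m+(k-1)d)$, or in the worst case Linnik's theorem on the least prime in an arithmetic progression, forces a prime exceeding $\max(k-1,M_\ell)$ into the progression as soon as $k$ is sufficiently large relative to $d$ and $M_\ell$, thereby bounding $k$ effectively. Once $k$ is bounded, the existence of two distinct good indices gives $(i_2-i_1)d\le M_\ell$ and so $d\le M_\ell$; the degenerate cases with at most one good index are handled separately. Finally, the canonical-height growth $\log B_n\sim c_E n^2$ converts the resulting bound on $m+(k-1)d$ into a bound on $y$. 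The main obstacle is this counting/sieve step: making quantitative and effective the guarantee that some term $m+id$ is free of prime factors $\le k-1$, so as to force a good index violating $m+id\le M_\ell$. This is where the dependence on $N_\ell$ and $M_\ell$ really enters the constant $c_1(N_\ell,M_\ell)$, whose shape will mirror the strength of the available effective estimates on primes in arithmetic progressions.
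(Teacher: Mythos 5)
Your structural setup (the factorization $B_{m+id}=s_it_i$ with $t_i$ the part built from primes occurring in no other factor, so that each $t_i$ is forced to be an $\ell$-th power, and the resulting notion of good indices with $m+id\in\mathcal{P}_\ell(B)$) is sound and is in the spirit of the paper. But the step you yourself flag as the main obstacle is a genuine gap, and the fix you propose does not work. Your criterion for goodness requires a term $m+id$ that is \emph{entirely free} of prime factors $\leq k-1$ (or is itself prime), and you hope to force such a term via Sylvester--Erd\H{o}s type results or Linnik's theorem. Neither does this: the Sylvester--Erd\H{o}s--Shorey school of results guarantees that many terms \emph{have a} prime factor $>k$, not that any term has \emph{only} prime factors $>k$; and by the Erd\H{o}s--Rankin/Jacobsthal phenomenon there are arbitrarily long blocks of consecutive integers (hence of coprime arithmetic progressions) in which \emph{every} term has a prime factor $\leq k-1$, so no sieve can guarantee a term coprime to $(k-1)!$. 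Linnik's theorem is also of no help when $m$ is large relative to $k$ and $d$, since a block of only $k$ terms of a progression near a huge $m$ need not contain any prime. There is additionally a circularity in ``$k$ sufficiently large relative to $d$ and $M_\ell$'': at that stage $d$ is still unbounded, so you need a bound on $k$ that is uniform in $d$.

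The idea you are missing is the paper's splitting $m+id=a_ix_i$ into its $k$-smooth part $a_i$ and rough part $x_i$, combined with the Hasse bound on the rank of apparition, $r_p\leq p+1+2\sqrt{p}$. If $m+id$ merely \emph{has} a prime factor $>k$ (and, for a minor technical reason, none in the interval $(k,\,k+1+2\sqrt{k}]$), then every prime $p\mid a_i$ has $r_p\leq k+1+2\sqrt{k}$, hence $r_p\nmid x_i$ and $p\nmid B_{x_i}$; together with $\gcd(B_{x_i},B_{m+id}/B_{x_i})\mid a_i$ this makes $B_{x_i}$ coprime to everything else in the product, so $x_i\in\mathcal{P}_\ell(B)$ --- note it is the index $x_i$, not $m+id$, that lands in $\mathcal{P}_\ell(B)$. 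This weaker hypothesis \emph{can} be guaranteed for $\gg k/\log k$ terms uniformly in $m$ and $d$ (Laishram--Shorey), which immediately bounds $k$ by $N_\ell$ via the pigeonhole $w_0\leq N_\ell$; the cases $k\leq 48$ and $d=1$, $m\leq k$ are then treated by separate elementary arguments (Pillai-type results, and primes in $(\tfrac23(m+k-1),m+k-1)$, respectively), and an Erd\H{o}s-type argument bounds $m$ and $d$ once $k$ is bounded. Without the $a_ix_i$ device, or some substitute that tolerates small prime factors in $m+id$, your argument cannot bound $k$.
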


To prove Theorem \ref{theorem1}, we need to combine several tools, including arithmetic properties of elliptic divisibility sequences, arguments from \cite{bravodasguzmanshanta, lucashorey} and new variants of bounds, developed in this paper, concerning the greatest prime divisor and the number of prime divisors of blocks of consecutive terms of arithmetic progressions.

Finally, we mention a possible generalization of \eqref{eq1}, which could be handled by our arguments. In their paper, Everest, Reynolds and Stevens \cite{everestreynoldsstevens} remark that it is possible to modify their proof on the finiteness of $\mathcal{P}_\ell(B)$ to deduce finiteness also for $S$-unit multiples of $\ell$-th powers, where $S$ is any given finite set of primes. Then with slight changes (but more technicality involved) we could prove the analogue of Theorem \ref{theorem1} for the equation
\[
B_mB_{m+d}\dots B_{m+(k-1)d}=by^\ell,
\]
where $b$ is an arbitrary $S$-unit, i.e. $b$ is composed of fixed primes (coming from $S$) with unspecified non-negative exponents. Observe that it also makes the assumption $B_1=1$ unnecessary. Indeed, dividing both sides by $B_1^k$, we get an equation of the form
\[
B'_mB'_{m+d}\dots B'_{m+(k-1)d}=b'y^\ell.
\]
Since the sequence $B'=(B'_n)_{n=1}^\infty=(B_n/B_1)_{n=1}^\infty$ preserves the arithmetic properties of $B$ we rely on (see Remark \ref{remarkcondition}), one can solve the above more general equation, as well (and hence omit the condition $B_1=1$).

\section{Auxiliary tools}

Recall that throughout the paper we use the assumption $B_1=1$. Thus, in particular, we have ${\mathcal P}_\ell(B)\neq \emptyset$, $N_\ell\geq 1$ and $M_\ell\geq 1$.

Arithmetic properties of elliptic divisibility sequences have been well-studied, see for instance the fundamental paper of Ward \cite{ward} and theses of Shipsey \cite{shipsey} and Swart \cite{swart} and the references given there. Let $B=(B_n)_{n=1}^\infty$ be an elliptic divisibility sequence, $p$ be a prime and denote by $r_p$ the smallest number such that $p\mid B_{r_p}$ holds. Then $r_p$ is called the rank of apparition of $p$ in $B$. Further, let $\nu_p(z)$ stand for the exponent of $p$ in $z$.

\begin{lemma}
\label{lemma1}
Let $B=(B_n)_{n=1}^\infty$ be an elliptic divisibility sequence. Then we have the following properties.
\begin{itemize}
\item[(i)] If $p\mid B_m$, then
\[
\nu_p(B_m)=\nu_p\left(\dfrac{m}{r_p}\right)+\nu_p(B_{r_p}).
\]
\item[(ii)] $B$ is a strong divisibility sequence, that is, for every $m,n\geq 1$ we have
\[
\gcd(B_{m},B_{n})=B_{\gcd(m,n)}.
\]
\item[(iii)] For every prime $p$ we have
\[
r_p\leq p+1+2\sqrt{p}.
\]
\item[(iv)] For $m\mid n$ we have
\[
\gcd\left(B_m,\dfrac{B_n}{B_m}\middle)\right| \dfrac{n}{m}.
\]
\end{itemize}
\end{lemma}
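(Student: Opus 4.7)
My plan is to derive all four items from the standard theory of the underlying elliptic curve $E$ and its reduction modulo primes, in the spirit of Ward and Silverman. The common thread is that the $p$-adic structure of $B$ is controlled by the formal group of $E$ over $\mathbb{Z}_p$. I expect item (i) to be the main technical obstacle; once (i) is in hand, items (ii) and (iv) are immediate local consequences, and (iii) reduces to Hasse's bound.

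For (i), first I would show that $r_p\mid m$ precisely when $p\mid B_m$: writing the coordinates of $nP$ as in the definition, $p\mid B_n$ is equivalent to the reduction $\widetilde{nP}$ being the identity in $E(\mathbb{F}_p)$, so the set of such $n$ is the cyclic subgroup of $\mathbb{Z}$ generated by the order $r_p$ of $\widetilde{P}$. Writing $m = r_p m'$, the point $Q = r_p P$ lies in the kernel of reduction, and hence inside the formal group of $E$ at $p$ parametrised by $z=-x/y$; a direct check in these coordinates gives $\nu_p(z_{nP})=\nu_p(B_n)$ whenever $p\mid B_n$. Inside the formal group the multiplication-by-$m'$ map satisfies $\nu_p([m']z)=\nu_p(m')+\nu_p(z)$ for $\nu_p(z)\ge 1$, so applying this to $z=z_Q$ yields $\nu_p(B_m)=\nu_p(m/r_p)+\nu_p(B_{r_p})$, as required. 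For (iii), I would invoke Hasse's bound $|E(\mathbb{F}_p)|\le p+1+2\sqrt{p}$: since $r_p$ equals the order of $\widetilde{P}$ in $E(\mathbb{F}_p)$, it divides $|E(\mathbb{F}_p)|$; the finitely many primes of bad reduction can be absorbed into the bound with only cosmetic changes.

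With (i) established, (ii) and (iv) follow by a uniform local argument. For every prime $p$, either $r_p\nmid\gcd(m,n)$, in which case $p$ divides none of $B_m$, $B_n$, $B_{\gcd(m,n)}$, or $r_p\mid\gcd(m,n)$, in which case (i) gives
\[
\nu_p(\gcd(B_m,B_n))=\min(\nu_p(m/r_p),\nu_p(n/r_p))+\nu_p(B_{r_p})=\nu_p(B_{\gcd(m,n)}),
\]
which proves (ii). For (iv), if $m\mid n$ and $p\mid\gcd(B_m,B_n/B_m)$, then $r_p\mid m\mid n$, and (i) yields
\[
\nu_p(B_n/B_m)=\nu_p(n/r_p)-\nu_p(m/r_p)=\nu_p(n/m),
\]
hence $\nu_p(\gcd(B_m,B_n/B_m))\le\nu_p(n/m)$ for all $p$, giving (iv).
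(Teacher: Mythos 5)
Your overall route is the same as the paper's: the paper simply cites Silverman's formula (13) for (i), Ward (or (i) itself) for (ii), and the Hasse bound for (iii), and then deduces (iv) from (i) by exactly the local valuation computation you give; your derivation of (ii) from (i) is likewise the route the paper indicates. The one place where you go beyond the paper is in sketching a proof of (i) via the formal group, and that is also where the only substantive gap sits. The identity $\nu_p([m']z)=\nu_p(m')+\nu_p(z)$ does \emph{not} hold for all $z$ with $\nu_p(z)\geq 1$: the expansion $[p](T)=pf(T)+g(T^p)$ (equivalently, the formal logarithm) gives $\nu_p([p]z)=\nu_p(z)+1$ only when $\nu_p(z)>1/(p-1)$, which is automatic for odd $p$ but fails to be automatic when $p=2$ and $\nu_2(z_Q)=\nu_2(B_{r_2})=1$; in that case your argument only yields $\nu_2([2]z)\geq 2$ a priori, with possible cancellation between $2z$ and the quadratic term. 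This exceptional case needs separate treatment (it is absorbed into the reference the paper cites). Similarly, primes of bad reduction affect not only (iii) but also the reduction-and-formal-group argument in (i), where one must work with a minimal model and the smooth part of the reduction rather than a bare $E(\mathbb{F}_p)$; you acknowledge this only for (iii).

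Two harmless imprecisions: in your proof of (ii), when $r_p\nmid\gcd(m,n)$ it is not true that $p$ divides none of $B_m$, $B_n$, $B_{\gcd(m,n)}$ --- it may well divide one of $B_m$, $B_n$; what you actually need, and what is true, is that it cannot divide both, so that $\nu_p(\gcd(B_m,B_n))=0=\nu_p(B_{\gcd(m,n)})$. Everything else, in particular the deduction of (iv), coincides with the paper's own computation.
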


\begin{proof}
For (i) see formula (13) in \cite{silverman}. Part (ii) is exactly Theorem 6.4 in \cite{ward} and also follows from (i), while (ii) is an immediate consequence of the famous Hasse-Weil theorem, see Section 4.7.2 in \cite{shipsey}. Applying (iii) for $m\mid n$ yields
\[
\nu_p\left(\dfrac{B_n}{B_m}\right)= \nu_p\left(\dfrac{n}{r_p}\right)+\nu_p(B_{r_p})- \nu_p\left(\dfrac{m}{r_p}\right)-\nu_p(B_{r_p})= \nu_p\left(\dfrac{n}{m}\right)
\]
and hence
\[
\min\left(\nu_p(B_m), \nu_p\left(\dfrac{B_n}{B_m}\right)\right)\leq \nu_p\left(\dfrac{n}{m}\right)
\]
which proves part (iv).
\end{proof}

We write $P(z)$ for the greatest prime divisor of the positive integer $z$, with the convention $P(1)=1$. Further, for $0\leq i<k$ we put
\[
m+id=a_ix_i
\]
with $P(a_i)\leq k$ and $\gcd\left(x_i,\prod\limits_{p\leq k}p\right)=1$.

Our next lemma plays a crucial role later on.
As we are not aware of such a result appearing in the literature, we give its simple proof, as well.

\begin{lemma}
\label{lemma2}
Let $0\leq i<k$. Then
\[
\gcd\left(B_{x_i},\prod\limits_{j\neq i}B_{m+jd}\right)=1\quad \mathrm{and}\quad \gcd\left(B_{x_i},\dfrac{B_{m+id}}{B_{x_i}}\middle)\right| a_i.
\]
\end{lemma}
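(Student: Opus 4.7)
The plan is to treat the two claims separately, both relying on the rank-of-apparition machinery assembled in Lemma~\ref{lemma1}.

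For the first assertion, I would argue by contradiction: suppose some prime $p$ divides both $B_{x_i}$ and $B_{m+jd}$ for some $j\neq i$. By Lemma~\ref{lemma1}(ii), $p\mid B_{\gcd(x_i,\,m+jd)}$, so the rank of apparition satisfies $r_p\mid \gcd(x_i,\,m+jd)$. The first key step is to establish $r_p>k$: since $r_p\mid x_i$ and $\gcd(x_i,\prod_{q\leq k}q)=1$, the integer $r_p$ has no prime divisor $\leq k$; combined with $r_p>1$ (which holds because $B_1=1$), this forces $r_p>k$.

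Next, from $x_i\mid m+id$ and Lemma~\ref{lemma1}(ii) I obtain $B_{x_i}\mid B_{m+id}$, so $r_p\mid m+id$ as well. Together with $r_p\mid m+jd$ this gives $r_p\mid(i-j)d$. A short check shows $\gcd(r_p,d)=1$: any common prime factor would, via $r_p\mid m+id$, divide $m$ as well, contradicting $\gcd(m,d)=1$. Hence $r_p\mid i-j$, but $0<|i-j|<k<r_p$, a contradiction.

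For the second assertion, the plan is to invoke Lemma~\ref{lemma1}(iv) directly. Since $x_i\mid m+id=a_ix_i$, applying the lemma with the pair $(x_i,\,m+id)$ yields
\[
\gcd\!\left(B_{x_i},\frac{B_{m+id}}{B_{x_i}}\right)\,\Bigm|\,\frac{m+id}{x_i}=a_i,
\]
which is exactly the claimed divisibility. The only delicate point in the entire argument is the bound $r_p>k$; everything else is routine bookkeeping with Lemma~\ref{lemma1}, so I do not anticipate a genuine obstacle.
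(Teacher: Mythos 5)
Your proof is correct and follows essentially the same route as the paper: both arguments reduce the first claim to the observation that an integer greater than $k$ and coprime to $d$ can divide at most one of $m,m+d,\dots,m+(k-1)d$, and both obtain the second claim by applying Lemma~\ref{lemma1}(iv) to the pair $(x_i,\,m+id)$. The only cosmetic difference is that you route the first step through the rank of apparition $r_p$ of a hypothetical common prime divisor, whereas the paper notes directly that $\gcd(x_i,m+jd)=1$ and invokes strong divisibility (handling the degenerate case $x_i=1$ via $B_1=1$, which your contradiction argument covers implicitly since $B_{x_i}=1$ then has no prime divisors).
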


\begin{proof}
If $x_i=1$, then the assertion of the lemma follows from $B_1=1$. Thus assume that $x_i\neq 1$. Then for every $p\mid x_i$ we have $p>k$. Since a prime greater than $k$ can divide at most one of $m,m+d,\dots,m+(k-1)d$, for every $j\neq i$ we get $\gcd(x_i,m+jd)=1$ and from part (i) of Lemma \ref{lemma1} the first formula follows. The second part of the statement is an immediate consequence of part (iv) of Lemma \ref{lemma1}.
\end{proof}

Using the above lemmas, we can already prove Theorem \ref{theorem1} for small values of $k$.

\begin{lemma}
\label{lemma3}
Let $(m,d,k,y)$ be a solution to \eqref{eq1} with $k\leq 48$. Then we have $\max(m,d)\leq c_2M_\ell$, where $c_2=1$ for $k\leq 16$, $c_2=2$ for $17\leq k\leq 24$ and $c_2=3$ for $25\leq k\leq 48$.
\end{lemma}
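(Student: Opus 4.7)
The plan is to use Lemma \ref{lemma2} to show that, for each index $i$ with $a_i \leq c_2$, the integer $B_{x_i}$ is in fact an $\ell$-th power. This will force $x_i \in \mathcal{P}_\ell(B) \cup \{1\}$, hence $x_i \leq M_\ell$ and $m + id = a_i x_i \leq c_2 M_\ell$. Locating two such indices then yields the desired bound on $\max(m,d)$.

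First I would prove the key claim: if $a_i \leq c_2$, then $B_{x_i}$ is an $\ell$-th power. By Lemma \ref{lemma2}, $B_{x_i}$ is coprime to $\prod_{j\neq i} B_{m+jd}$, so every prime $p \mid B_{x_i}$ with $p \nmid a_i$ contributes exactly $\nu_p(B_{x_i})$ to $\nu_p(y^\ell)$, forcing $\ell \mid \nu_p(B_{x_i})$. It therefore suffices to rule out primes $p \mid \gcd(a_i, B_{x_i})$. Any such $p$ satisfies $p \leq c_2 \leq 3$, and from $p \mid B_{x_i}$ we get $r_p \mid x_i$. Since $x_i$ is coprime to every prime $\leq k$, so is $r_p$, forcing $r_p > k$. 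But Lemma \ref{lemma1}(iii) yields $r_p \leq p + 1 + 2\sqrt{p} < 8$, contradicting $k \geq 17$ in the $c_2 = 2$ case and $k \geq 25$ in the $c_2 = 3$ case; the $c_2 = 1$ case is vacuous since $a_i = 1$ has no prime divisors. Thus no such $p$ exists and $B_{x_i}$ is an $\ell$-th power.

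Next I would exhibit two indices $0 \leq i_1 < i_2 \leq k-1$ satisfying $a_{i_j} \leq c_2$. A bad index (with $a_i > c_2$) forces $m + id$ to possess a $k$-smooth divisor exceeding $c_2$, meaning either some prime $p \in (c_2, k]$ divides $m+id$, or some prime power $p^e > c_2$ with $p \leq c_2$ divides $m+id$. Since for each prime $p \nmid d$ at most $\lceil k/p \rceil$ of the $k$ AP terms are divisible by $p$, a direct counting carried out separately on the three ranges $k \leq 16$, $17 \leq k \leq 24$, $25 \leq k \leq 48$ bounds the number of bad indices by $k-2$, leaving at least two good indices. The border case $k = 2$ is handled separately: Lemma \ref{lemma1}(ii) yields $\gcd(B_m, B_{m+d}) = B_{\gcd(m,d)} = B_1 = 1$, so both $B_m$ and $B_{m+d}$ are $\ell$-th powers, immediately giving $m, m+d \leq M_\ell$.

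Finally, with good indices $i_1 < i_2$, the bounds $m + i_j d \leq c_2 M_\ell$ give $m \leq m + i_1 d \leq c_2 M_\ell$ and $(i_2 - i_1) d = (m+i_2 d) - (m+i_1 d) \leq c_2 M_\ell$, hence $d \leq c_2 M_\ell$, concluding $\max(m,d) \leq c_2 M_\ell$.

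The main obstacle is the counting step isolating two good indices; it requires a careful case analysis across the three ranges of $k$, weighing the contributions of primes in $(c_2, k]$ against higher prime powers of primes $\leq c_2$, to confirm that the total number of bad indices never reaches $k-1$.
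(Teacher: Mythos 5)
Your first step is sound and close in spirit to the paper's Lemma \ref{lemma4}: if $a_i\leq c_2$ then indeed $B_{x_i}$ is an $\ell$-th power, since any prime $p\mid\gcd(a_i,B_{x_i})$ would satisfy $r_p\mid x_i$ with $r_p\leq p+1+2\sqrt{p}<8\leq k$ while $x_i$ has only prime factors exceeding $k$. The fatal problem is the second step: the claim that at least two (or even one) indices satisfy $a_i\leq c_2$ is simply false, not merely hard to verify. Already for $k=3$, $d=1$, $m=2$ the terms are $2,3,4$ and every one of them is divisible by a prime $\leq k$, so $a_0=2$, $a_1=3$, $a_2=4$ and there are \emph{no} good indices in your sense. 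More generally, the Jacobsthal function of $2\cdot3\cdot5\cdot7\cdot11\cdot13$ equals $22$, so there exist $16$ consecutive integers each divisible by a prime at most $13$; hence for $k=16$, $d=1$ one can have $a_i>1$ for every $i$, and the analogous failure occurs throughout the ranges $17\leq k\leq 24$ and $25\leq k\leq 48$. Your own union-bound heuristic already signals this: $\sum_{p\leq 16}\lceil 16/p\rceil=25>16$, so the counting cannot close.

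The condition you actually need is much weaker than ``$m+id$ has $k$-smooth part at most $c_2$''; what suffices is that $m+id$ shares no factor (or only a factor $2$ or $3$) with the \emph{other} terms of the progression, since then Lemma \ref{lemma1}(ii) gives $\gcd(B_{m+id},B_{m+jd})=B_{\gcd(m+id,m+jd)}$ and the whole term $B_{m+id}$ (not just $B_{x_i}$) is forced to be an $\ell$-th power, up to the contribution of $r_2$ or $r_3$. In the example $2,3,4$ the middle term is coprime to the other two even though $a_1=3$. The existence of such an index for $k\leq 16$ is exactly Pillai's theorem, and for $17\leq k\leq 48$ (with $\gcd\leq 2$, resp.\ $\leq 3$) it is Theorem 2.2 of Hajdu and Saradha; these are the external inputs the paper relies on, and the extra argument with $a_i=2t$ (resp.\ $3t$) and the rank of apparition $r_2\leq 5$ (resp.\ $r_3$) is what handles the residual common factor. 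Without such a Pillai-type result your approach cannot be repaired by sharper counting.
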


\begin{proof} Suppose first that $k\leq 16$. Then by a classical result of Pillai \cite{pillai} there is a term $m+id$ with $\gcd(m+id,m+jd)=1$ for every $j\neq i$. Observe that here we may assume that $i>0$. Indeed, if $i=0$ then by $\gcd(m,m+jd)=1$ for all $j=1,\dots,k-1$, using Pillai's result again for the terms $m+d,\dots,m+(k-1)d$, we can find an index $i>0$ with the desired property. Then, by $B_1=1$ and part (i) of Lemma \ref{lemma1} we have $\gcd(B_{m+id},B_{m+jd})=1$. Hence $m+id\in\mathcal{P}_\ell(B)$ and $\max(m,d)\leq m+d\leq m+id\leq M_\ell$, and the lemma follows in this case.

Assume next that $17\leq k\leq 24$. Then by Theorem 2.2 of Hajdu and Saradha \cite{hajdusaradha} there is a term $m+id$ with $\gcd(m+id,m+jd)\leq 2$ for every $j\neq i$. Similarly as in the case $k\leq 16$, we may assume that $i>0$. If in fact $\gcd(m+id,m+jd)=1$ for all $j\neq i$, then just as before, we get $m+id\in\mathcal{P}_\ell(B)$ and $\max(m,d)\leq M_\ell$. So we may assume that $\gcd(m+id,m+jd)=2$ for some $j\neq i$; in particular, $a_i$ is even. Write $a_i=2t$, and observe that $\gcd(t,m+jd)=1$ for all $j\neq i$. Rewrite \eqref{eq1} as
\begin{equation}
\label{eq2}
B_{tx_i}\dfrac{B_{m+id}}{B_{tx_i}}\prod\limits_{j\neq i}B_{m+jd}=y^\ell .
\end{equation}
Observe that $\gcd(tx_i,m+jd)=1$ and hence $\gcd(B_{tx_i},B_{m+jd})=1$ for every $j\neq i$. On the other hand, by part (iv) of Lemma \ref{lemma1} we have
\[
\gcd\left(B_{tx_i},\dfrac{B_{m+id}}{B_{tx_i}}\middle)\right| 2.
\]
Now if $2\mid B_{tx_i}$, then we have $r_2\mid tx_i$. This by $r_2\leq 5$ following from part (ii) of Lemma \ref{lemma1}, implies that $r_2\mid t$. However, this would clearly contradict the choice of $m+id$. So $B_{tx_i}$ is odd, and hence coprime to $B_{m+id}/B_{tx_i}$. Thus \eqref{eq2} yields that $tx_i\in\mathcal{P}_\ell(B)$ and we get $\max(m,d)\leq m+id=2tx_i\leq 2M_\ell$, proving our claim also in this case.

Finally, assume that $25\leq k\leq 48$. Then, using again Theorem 2.2 of \cite{hajdusaradha}, by a similar argument as before we obtain that there is an $i>0$ such that $\gcd(m+id,m+jd)\leq 3$ for every $j\neq i$. Now if this $\gcd$ is in fact $\leq 2$ for all $j\neq i$, then the same argument as for $17\leq k\leq 25$ gives $\max(m,d)\leq 2M_\ell$. Hence we may assume that there is a $j\neq i$ such that $\gcd(m+id,m+jd)=3$. In particular, $3\mid a_i$, and we can write $a_i=3t$. Now we can just follow the argument for $17\leq k\leq 24$ to conclude that $tx_i\in\mathcal{P}_\ell(B)$ and get $\max(m,d)\leq 3M_\ell$. This finishes the proof.
\end{proof}

\begin{remark}
In certain cases, Lemma \ref{lemma3} can be extended for larger values of $k$. This is based on quantitites concerning a problem of Pillai \cite{pillai} and its generalizations, obtained by Hajdu and Saradha \cite{hajdusaradha} and by Hajdu and Szikszai \cite{hajduszikszai1, hajduszikszai2}. To do so, one needs to know which terms $B_n$ satisfy $B_n=1$ and compare the set of the corresponding indices with the tables in \cite{hajduszikszai1, hajduszikszai2}. For example, if we take the sequence generated by the point $P=(0,0)$ on the curve $y^2+y=x^3-x$, then we have $B_1=B_2=B_3=B_4=B_6=1$. Using Table 2 in \cite{hajduszikszai2} we could extend Lemma \ref{lemma3} for $k\leq 78$.
\end{remark}

Fix now $m$, $d$ and $k$ and consider the indices $m+id$ $(0\leq i<k)$. Write $k'=k+1+2\sqrt{k}$ and put
\[
\begin{array}{llllll}
W_1 & = & \{i\ :\ \exists p\mid (m+id)\ \mathrm{with}\ p>k\}, & w_1 & := & |W_1|;\\
W_2 & = & \{i\in W_1\ :\ \exists p\mid (m+id)\ \mathrm{with}\ k<p\leq k'\}, & w_2 & := & |W_2|;\\
W_0 & = & W_1\setminus W_2, & w_0 & := & |W_0|.
\end{array}
\]
Here $p$ always denotes a prime number. Clearly, we have $w_0=w_1-w_2$. Further,
$$
w_2\leq \pi_d(k')-\pi_d(k)\leq \pi(k')-\pi(k),
$$
where $\pi_d(x)$ stands for the number of primes up to $x$ which does not divide $d$.

An important connection between the sets $W_0$ and $\mathcal{P}_\ell(B)$ is given by the following lemma.

\begin{lemma}
\label{lemma4}
Let $(m,d,k,y)$ be a solution to \eqref{eq1}. Then $x_i\in\mathcal{P}_\ell(B)$ for each $i\in W_0$. In particular, we have $w_0\leq N_\ell$, and also $k<M_\ell$ if $w_0>0$.
\end{lemma}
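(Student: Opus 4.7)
The plan is to show that for each $i\in W_0$, the factor $B_{x_i}$ splits off cleanly from the product in \eqref{eq1} as a coprime block, forcing it to be an $\ell$-th power on its own; then a short counting argument gives $w_0\leq N_\ell$ and $k<M_\ell$.

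First I would unpack the definitions. For $i\in W_0=W_1\setminus W_2$, every prime $p$ dividing $m+id$ with $p>k$ satisfies in fact $p>k'=k+1+2\sqrt{k}$; moreover at least one such prime exists (since $i\in W_1$). Since $x_i$ is the part of $m+id$ coprime to $\prod_{p\leq k}p$, this shows $x_i>1$ and every prime factor of $x_i$ exceeds $k'$. Lemma~\ref{lemma2} then already gives
\[
\gcd\!\left(B_{x_i},\prod_{j\neq i}B_{m+jd}\right)=1
\quad\text{and}\quad
\gcd\!\left(B_{x_i},\frac{B_{m+id}}{B_{x_i}}\right)\Bigm| a_i.
\]
The heart of the argument is to upgrade the second gcd to $1$, i.e.\ to show $\gcd(B_{x_i},a_i)=1$. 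Take any prime $p\mid a_i$, so $p\leq k$. By Lemma~\ref{lemma1}(iii), $r_p\leq p+1+2\sqrt{p}\leq k'$. If $p$ divided $B_{x_i}$, then $r_p\mid x_i$, forcing $r_p\geq 2$ (as $B_1=1$) and every prime factor of $r_p$ to exceed $k'$, whence $r_p>k'$, a contradiction. So $p\nmid B_{x_i}$ for every $p\mid a_i$, and the gcd with $a_i$ is indeed $1$.

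Combining the two coprimality statements, $B_{x_i}$ is coprime to the complementary factor $(B_{m+id}/B_{x_i})\cdot\prod_{j\neq i}B_{m+jd}$. Since \eqref{eq1} writes the whole product as $y^\ell$, $B_{x_i}$ must itself be an $\ell$-th power, i.e.\ $x_i\in\mathcal{P}_\ell(B)$.

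It remains to count. I would show that the map $i\mapsto x_i$ is injective on $W_0$: if $i\neq j$ both lie in $W_0$ and $x_i=x_j$, pick any prime $p\mid x_i$; then $p>k$ and $p$ divides both $m+id$ and $m+jd$, hence $p\mid(j-i)d$. As $|j-i|<k<p$, this forces $p\mid d$ and then $p\mid m$, contradicting $\gcd(m,d)=1$. Therefore $w_0=|W_0|\leq|\mathcal{P}_\ell(B)|=N_\ell$. Finally, if $w_0>0$, take any $i\in W_0$: then $x_i\in\mathcal{P}_\ell(B)$ and $x_i$ has a prime factor greater than $k$, so $k<x_i\leq M_\ell$.

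The only subtle step is the one bridging Lemma~\ref{lemma1}(iii) and the structure of $W_0$, namely exploiting that $r_p\leq k'$ for $p\leq k$ is incompatible with $r_p$ being built from primes exceeding $k'$; everything else is a clean bookkeeping on top of Lemmas~\ref{lemma1} and \ref{lemma2}.
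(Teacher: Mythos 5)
Your proof is correct and follows essentially the same route as the paper: split off $B_{x_i}$ via Lemma~\ref{lemma2}, use $r_p\leq p+1+2\sqrt{p}\leq k'$ against the fact that every prime factor of $x_i$ exceeds $k'$ to kill the residual gcd, and then count via the distinctness of the $x_i$. You merely supply a bit more detail than the paper does (the explicit injectivity argument for $i\mapsto x_i$ and the $r_p\geq 2$ step), which is fine.
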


\begin{proof}
Observe that for $i\in W_0$ the numbers $x_i$ are distinct, and also that we have $q>k'$ for every prime divisor $q$ of $x_i$. Let $i\in W_0$ and let $p$ be a prime divisor of $a_i$. Then
by part (ii) of Lemma \ref{lemma1} we have $r_p\leq p+1+2\sqrt{p}\leq k'$. Thus $r_p\nmid x_i$, whence $p\nmid B_{x_i}$, and by Lemma \ref{lemma2} we have $\gcd(B_{x_i},B_{m+id}/B_{x_i})=1$. This immediately gives $x_i\in\mathcal{P}_\ell(B)$. As the $x_i$ are distinct for $i\in W_0$, we obtain $w_0\leq N_\ell$. Finally, as if $i\in W_0$ then we have $k<x_i\leq M_\ell$, the lemma follows.
\end{proof}

\begin{remark}
\label{remarkcondition}
Concerning properties of elliptic divisibility sequences, Lemma \ref{lemma4} is the last we state. With little effort one can prove that the sequence $B'=(B'_n)_{n=0}^\infty=(B_n/B_1)_{n=0}^\infty$ preserves (i) even if $B_1\neq 1$. Hence (ii) and (iv) also remain valid. Since (iii) is true for arbitrary curves (Hasse's theorem holds), we find that the statements of Lemma \ref{lemma1} are independent of the condition $B_1=1$. This also implies the truth of Lemma \ref{lemma2} and \ref{lemma4} for $B'$. As it was mentioned already in the Introduction, this allows one to omit $B_1=1$ and consider \eqref{eq1} without restrictions on $B$.
\end{remark}

In what follows, we shall establish lower bounds for $w_0$. For this, we need results concerning the number of terms $W(\Delta)$ of $\Delta$ having a prime factor $>k$, where
$$\Delta=m(m+d)\dots (m+(k-1))d.$$

\begin{lemma}
\label{newlemma}
Let $k\geq 31$. Then we have
\begin{itemize}
\item[(i)] $W(\Delta)\geq\min\left(\left\lfloor \frac{3}{4}\pi(k)\right\rfloor-1,\pi(2k)-\pi(k)-1\right)$ if $d=1$ and $m>k$,
\item[(ii)] $W(\Delta)>\pi(2k)-\pi_d(k)-\rho$ if $d>1$, where
$\rho=1$ for $d=2$ and $\rho=0$ otherwise.
\end{itemize}
\end{lemma}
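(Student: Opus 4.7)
The plan is to deduce the lower bound on $W(\Delta)$ from an upper bound on the number of \emph{smooth} terms, i.e.\ on the cardinality of $T=\{i:0\le i<k,\ P(m+id)\le k\}$; since clearly $W(\Delta)=k-|T|$, controlling $|T|$ from above is equivalent to the desired statement. Because $\gcd(m,d)=1$, every term $m+id$ is coprime to $d$, so for $i\in T$ all prime factors of $m+id$ lie among the $\pi_d(k)$ primes $p\le k$ with $p\nmid d$; in part (i) this pool is all $\pi(k)$ primes.

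I would combine two counting devices. First, a Chebyshev-style product estimate: for every prime $p\le k$ with $p\nmid d$, standard counting of multiples in an AP gives
\[
\sum_{i=0}^{k-1}\nu_p(m+id)\le \frac{k}{p-1}+\frac{\log(m+(k-1)d)}{\log p},
\]
so that $\prod_{i\in T}(m+id)$ admits an upper bound involving only the $\pi_d(k)$ (or $\pi(k)$) available primes. Second, a pigeonhole count on primes in $(k,2k]$: any prime $p\in(k,2k]$ coprime to $d$ divides at most one term of the AP since two such multiples would be at least $p>k$ apart in value and thus more than $k-1$ apart in index. Hence if a prime $p\in(k,2k]$ with $p\nmid d$ does divide some $m+id$, that index $i$ lies in the complement of $T$ and contributes to $W(\Delta)$.

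For part (i) ($d=1$, $m>k$), I would argue as follows. Each $i\in T$ satisfies $m+i>k$, so the upper bound on $\prod_{i\in T}(m+id)$ from the Chebyshev estimate, together with the lower bound $k^{|T|}<\prod_{i\in T}(m+id)$, yields $|T|\le \pi(k)-\lfloor\tfrac34\pi(k)\rfloor+1$, giving the first branch of the bound. The alternative branch $\pi(2k)-\pi(k)-1$ comes from the pigeonhole observation above: in $k$ consecutive integers all $>k$, I would show that at most one prime in $(k,2k]$ can fail to hit a term, since each hitting prime isolates a distinct rough index. For part (ii) ($d>1$), the same two devices give $W(\Delta)>\pi(2k)-\pi_d(k)-\rho$; the primes counted are the primes in $(k,2k]$ coprime to $d$ that divide a term, together with the primes $p\le k$ dividing $d$ which reduce the pool available for smooth representation and thereby force more rough terms. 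The correction $\rho=1$ for $d=2$ absorbs the small loss arising from the fact that when $d=2$ the smoothness pool loses only a single prime and the parity structure of the AP costs exactly one additional prime in the accounting.

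The main obstacle is pinning down the explicit constants, namely the fraction $\tfrac34$ in part (i) and the sharp $\rho\in\{0,1\}$ dichotomy in part (ii). Both require explicit prime-counting inputs (Rosser--Schoenfeld/Dusart type bounds on $\pi(x)$ and $\theta(x)$), and one must treat a handful of small $d$ separately (certainly $d=2$, and likely $d=3,4,6$) before using uniform bounds for $k\ge 31$. I would also anticipate minor book-keeping in reconciling the upper bound on $\prod_{i\in T}(m+id)$ with the actual size of $m+(k-1)d$, which is why the hypothesis $m>k$ (resp.\ $d>1$ and $\gcd(m,d)=1$) enters crucially, and why the statement is formulated with the safety margins $-1$ and $-\rho$ rather than sharp equalities.
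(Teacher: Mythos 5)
The paper does not prove this lemma from first principles at all: part (i) is quoted directly from Corollary 1 of \cite{shantashorey2} and part (ii) from Theorem 1 of \cite{shantashorey1}, with the only added observation being that those results, though stated for the number of distinct prime factors, actually bound $W(\Delta)$ as their proofs show. Your proposal instead sketches a from-scratch argument, and while the ingredients you name (an Erd\H{o}s--Sylvester smooth-product estimate plus counting primes in $(k,2k]$ that each hit at most one term) are indeed the ones underlying the Laishram--Shorey proofs, the sketch has a genuine gap where all the difficulty of those papers lives.

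Concretely, your claimed intermediate bound $|T|\le \pi(k)-\left\lfloor \frac{3}{4}\pi(k)\right\rfloor+1$ in part (i) is false for $m$ close to $k$: take $m=k+1$, so the block is $k+1,\dots,2k$. A term in $(k,2k]$ with a prime factor exceeding $k$ must itself be a prime, so $|T|=k-(\pi(2k)-\pi(k))$; for $k=31$ this is $24$, while your claimed bound is $4$. The Chebyshev estimate with the trivial lower bound $k^{|T|}<\prod_{i\in T}(m+i)$ only yields $|T|\lesssim Ck/\log k$ for an absolute constant $C$, i.e.\ a bound of the same order as $\pi(k)$ but with no control on the constant, and certainly not the fraction $\frac14$ needed. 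The two branches of the $\min$ cannot be derived independently as you suggest; the actual proof is a case analysis on the size of $m$ in which the product estimate only becomes effective once $m$ is large, and the $(k,2k]$ prime count takes over for small $m$. You partly acknowledge this (``the main obstacle is pinning down the explicit constants''), but that obstacle is precisely the content of the cited theorems, so the proposal as written does not establish the lemma. The same applies to part (ii), where the inequality must hold for all $m\ge 1$ coprime to $d$ and the $\rho$-dichotomy for $d=2$ is not something the sketched accounting produces.
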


\begin{proof} Part $(i)$ immediately follows from Corollary 1 of \cite{shantashorey2}. Though the assertion was stated for the number of
distinct prime factors of $\Delta$, it is in fact valid for $W(\Delta)$ as given by the proof.  Part $(ii)$ is a simple consequence of Theorem 1 of \cite{shantashorey1}.
\end{proof}

We also use estimates for $\pi(x)$, due to Rosser and Schoenfeld \cite{rs}.

\begin{lemma}
\label{lemmapix}
For any $x\geq 17$ we have
$$
\frac{x}{\log x}<\pi(x)< \frac{x}{\log x}\left(1+\frac{3}{2\log x}\right).
$$
\end{lemma}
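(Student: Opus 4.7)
The bounds stated here are the classical explicit estimates of Rosser and Schoenfeld, and the plan is to sketch the route through Chebyshev's function $\theta(x)=\sum_{p\le x}\log p$ rather than attack $\pi(x)$ directly.

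First, I would establish effective bounds of the form $|\theta(x)-x|<\varepsilon(x)\,x$ with an \emph{explicit} $\varepsilon(x)$. This is the technical heart of the argument: one begins from the explicit formula
\[
\psi(x)=x-\sum_{\rho}\frac{x^{\rho}}{\rho}-\log(2\pi)-\tfrac{1}{2}\log(1-x^{-2}),
\]
truncates the sum over nontrivial zeros of $\zeta(s)$, and uses a verified zero-free region of the form $\sigma>1-1/(R\log|t|)$ for $|t|\ge T_0$ together with a numerical check that the low-lying zeros lie on the critical line up to some height $T$. Passing from $\psi$ to $\theta$ introduces only $O(\sqrt{x}\log^{2}x)$-size corrections, which are harmless for the target error. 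This yields $\theta(x)<x(1+\eta(x))$ and $\theta(x)>x(1-\eta(x))$ for sufficiently explicit $\eta(x)$ decaying like a power of $\log x$.

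Next, I would convert these into bounds for $\pi(x)$ by partial summation:
\[
\pi(x)=\frac{\theta(x)}{\log x}+\int_{2}^{x}\frac{\theta(t)}{t\log^{2}t}\,dt.
\]
Splitting the integral at a suitable point $x_0$ and applying the $\theta$-bounds in each range gives, after elementary estimation, the main term $x/\log x$ and the secondary term $x/\log^{2}x$, together with controlled remainders. To land precisely on the coefficient $3/2$ appearing in the statement, one keeps the second-order part of $\int_{2}^{x} dt/\log^{2}t$ explicit (it contributes $x/\log^{2}x$ plus a bounded-leading correction) and absorbs the remainder into the extra $\tfrac{1}{2\log x}$ slack.

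Finally, the analytic argument only controls $x$ beyond some threshold $x_0$; for $17\le x\le x_0$ one finishes by direct numerical verification, checking the inequalities prime by prime (this is the step that actually pins down the constant $17$ as the admissible lower endpoint). The main obstacle in this route is the first step: obtaining fully explicit constants in the bound for $\theta(x)$, which requires combining a rigorous zero-free region with a certified numerical verification of the Riemann hypothesis up to the chosen height $T$. Once that is in hand, the passage to $\pi(x)$ and the final tuning of constants is routine calculus.
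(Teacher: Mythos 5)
The paper does not prove this lemma at all: it simply cites Theorem 1 and Corollary 1 of Rosser and Schoenfeld \cite{rs}, which is the standard (and appropriate) move for an explicit prime-counting estimate of this kind. Your proposal instead sketches the internal proof of the cited result, and as a roadmap it is faithful to what Rosser and Schoenfeld actually do: explicit bounds on $\psi(x)$ and $\theta(x)$ via the explicit formula, a zero-free region combined with numerical verification of low-lying zeros, partial summation to pass from $\theta$ to $\pi$, and a finite computation to handle small $x$ and fix the threshold $17$. So the architecture is right, and it is genuinely more informative than the paper's one-line citation.

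However, as a \emph{proof} of the stated inequalities your text has a real gap, and you acknowledge it yourself: the entire content of the lemma lies in the explicit constants ($3/2$ in the upper bound, the admissible range $x\geq 17$), and none of the steps in your sketch actually produces a number. "A verified zero-free region of the form $\sigma>1-1/(R\log|t|)$" with unspecified $R$, "a numerical check up to some height $T$," and "absorbs the remainder into the extra $\tfrac{1}{2\log x}$ slack" are placeholders for precisely the computations that constitute the theorem; without them one cannot certify that the slack suffices or that $17$ is the correct cutoff (indeed, the upper bound in \cite{rs} holds for all $x>1$, while it is the lower bound that requires $x\geq 17$ --- a distinction your argument is not in a position to detect). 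In a paper like this one, the correct and complete proof is the citation; if you do want to reprove the estimate, you must either import the explicit $\theta$-bounds from \cite{rs} as a black box (at which point the partial summation step is the whole proof and should be carried out with explicit error terms), or carry out the numerical zero verification, which is far beyond the scope of a lemma.
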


\begin{proof} The upper bound is part of Theorem 1 of \cite{rs}, while the lower bound is in Corollary 1 in the same paper.
\end{proof}

Lemma \ref{newlemma} combined with Lemma \ref{lemmapix} easily implies the following assertion.

\begin{lemma}
\label{lemma5}
Let $k\geq 2$. Further, assume that $m>k$ if $d=1$. Then there exists an absolute constant $c>0$ such that
$$
w_0>\frac{ck}{\log k}.
$$
\end{lemma}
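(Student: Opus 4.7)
The plan is to decompose $w_0 = w_1 - w_2$ and estimate the two pieces separately. Since $w_1 = W(\Delta)$ by definition, Lemma \ref{newlemma} applies directly: the hypothesis $m > k$ in the case $d = 1$ is exactly what part (i) needs, while for $d > 1$ part (ii) together with $\pi_d(k) \leq \pi(k)$ gives $w_1 > \pi(2k) - \pi(k) - 1$. Thus for $k \geq 31$ one obtains in both cases
$$
w_1 \;\geq\; \min\!\left(\left\lfloor \tfrac{3}{4}\pi(k)\right\rfloor - 1,\ \pi(2k) - \pi(k) - 1\right).
$$

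The next step is to show that each entry inside this minimum is of order $k/\log k$ via Lemma \ref{lemmapix}. The first entry is immediate, since $\pi(k) > k/\log k$ for $k \geq 17$ yields $\tfrac{3}{4}\pi(k) > 3k/(4\log k)$. For the second, combining the lower bound $\pi(2k) > 2k/\log(2k)$ with the upper bound $\pi(k) < (k/\log k)(1 + 3/(2\log k))$ and expanding $1/\log(2k) = 1/(\log k + \log 2)$ asymptotically produces
$$
\pi(2k) - \pi(k) \;\geq\; \frac{k}{\log k}\left(1 - \frac{2\log 2 + 3/2}{\log k} + O\!\left(\frac{1}{(\log k)^2}\right)\right),
$$
which exceeds $k/(2\log k)$ once $k$ is sufficiently large.

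For $w_2$ I would use the trivial estimate already recorded just before the lemma, namely $w_2 \leq \pi(k') - \pi(k) \leq k' - k = 1 + 2\sqrt{k} = O(\sqrt{k})$, which is negligible compared to $k/\log k$. Putting the pieces together,
$$
w_0 \;=\; w_1 - w_2 \;\geq\; \frac{k}{2\log k} - 2\sqrt{k} - O(1) \;\geq\; \frac{ck}{\log k}
$$
for $k$ above some absolute threshold $k_0$ and a suitable absolute constant $c > 0$. The finitely many remaining values $2 \leq k < k_0$ are absorbed by taking $c$ small enough; in any event, the lemma will be used later only in conjunction with Lemma \ref{lemma3}, which already disposes of $k \leq 48$.

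The main obstacle is purely analytic: comparing $\pi(2k) - \pi(k)$ (and $\tfrac{3}{4}\pi(k)$) with $k/\log k$ using the slightly asymmetric Rosser--Schoenfeld bounds of Lemma \ref{lemmapix}, rather than via a single clean inequality. Once this comparison is carried out, the rest of the argument is bookkeeping given Lemma \ref{newlemma}, and the $O(\sqrt{k})$ error coming from $w_2$ is genuinely harmless.
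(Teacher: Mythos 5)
Your proposal follows the paper's proof exactly: decompose $w_0=w_1-w_2$, lower-bound $w_1\ge W(\Delta)$ via Lemma \ref{newlemma} (using $\pi_d(k)\le\pi(k)$ for $d>1$), upper-bound $w_2$ by the $O(\sqrt{k})$ count of primes in $(k,k+1+2\sqrt{k}]$, and compare everything with $k/\log k$ via Lemma \ref{lemmapix} --- this is precisely the ``simple calculation'' the paper leaves to the reader. The one loose point is your claim that the range $2\le k<k_0$ is absorbed by shrinking $c$: that cannot work, since $w_0$ can equal $0$ for small $k$ (e.g.\ $k=2$, $m=1$, $d=2$ gives $w_1=w_2=1$), but the paper's own proof has the identical defect (Lemma \ref{newlemma} requires $k\ge 31$), and as you note the lemma is only ever invoked for $k\ge 49$.
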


\begin{proof} Recall that $w_0=w_1-w_2$ and $w_2\leq \pi_d(k+1+2\sqrt{k})-\pi_d(k)\leq \pi(k+1+2\sqrt{k})-\pi(k)$. By observing that $w_1\geq W(\Delta)$, the
assertion follows from Lemmas \ref{newlemma} and \ref{lemmapix} by a
simple calculation.
\end{proof}

Under a certain assumption, we can establish a much better lower bound for $w_0$.

\begin{lemma}
\label{lemma6}
Let $k\geq 48$, and assume that $m+d\geq (k-1)^4$. Then we have
$$
w_0\geq \frac{3(k-1)}{4}-\pi_d(k+1+2\sqrt{k}).
$$
\end{lemma}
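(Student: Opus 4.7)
The plan is to reduce the lemma to an upper bound on the number $T$ of $k$-smooth terms among $\{m+id : 0 \leq i < k\}$. Writing $k' = k+1+2\sqrt{k}$, the already established estimate $w_2 \leq \pi_d(k') - \pi_d(k)$ gives
\[
w_0 = w_1 - w_2 = (k-T) - w_2 \geq k - T - \pi_d(k') + \pi_d(k),
\]
so it suffices to prove $T \leq (k+3)/4 + \pi_d(k)$.

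To this end, I would split the smooth terms into \emph{heavy} ones---those with $v_p(m+id) > \alpha_p$ for some prime $p \leq k$ with $p \nmid d$, where $\alpha_p := \lfloor \log_p(k-1) \rfloor$---and the remaining \emph{light} ones. Since $p^{\alpha_p+1} > k-1$, for each such $p$ the condition $p^{\alpha_p+1} \mid m+id$ forces $i$ into a single residue class modulo $p^{\alpha_p+1}$, which can hold for at most one $i \in \{0,\dots,k-1\}$. Summing over $p$, the number of heavy smooth terms is at most $\pi_d(k)$.

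For the light smooth terms, the hypothesis $m+d \geq (k-1)^4$ yields $\log(m+id) \geq 4 \log(k-1)$ for $i \geq 1$. Summing logarithms over the $T_L$ light terms and interchanging the order of summation,
\[
4(T_L - 1)\log(k-1) \leq \sum_{i \text{ light}} \log(m+id) \leq \sum_{\substack{p \leq k \\ p \nmid d}} \log p \sum_{j=1}^{\alpha_p} \left\lceil \frac{k}{p^j} \right\rceil \leq k \sum_{\substack{p \leq k \\ p \nmid d}} \frac{\log p}{p-1} + \psi_d(k-1),
\]
with $\psi_d(x) := \sum_{p^j \leq x,\, p \nmid d} \log p \leq \psi(x)$, where I used Legendre's formula to bound the inner sum by $k/(p-1) + \alpha_p$. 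Plugging in Mertens' estimate $\sum_{p \leq k} \log p/(p-1) = \log k + O(1)$ together with Lemma \ref{lemmapix} and the Chebyshev bound $\psi(k-1) \leq (k-1)(1 + O(1/\log k))$ (Rosser--Schoenfeld), the right-hand side is $k \log k + O(k)$, yielding $T_L \leq k/4 + O(k/\log k)$.

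The hard part is absorbing this $O(k/\log k)$ slack into the clean bound $T_L \leq (k+3)/4$ for all $k \geq 48$. I would handle this by a case distinction: if $d = 1$ then $m \geq (k-1)^4 - 1$ forces even the $i=0$ term to contribute essentially $4\log(k-1)$ on the left, allowing one to replace $T_L - 1$ by $T_L$ and gaining just enough; if $d \geq 2$ then at least one prime divides $d$ and is excluded from both sums on the right, already providing the needed reduction. An explicit numerical check at $k = 48$ using the Rosser--Schoenfeld constants \cite{rs} then closes any residual gap. Combined with the heavy-term count this gives $T \leq T_L + \pi_d(k) \leq (k+3)/4 + \pi_d(k)$, completing the proof.
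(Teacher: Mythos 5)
Your reduction to $T\le (k+3)/4+\pi_d(k)$ is the right target and the heavy-term count ($\le\pi_d(k)$, one per prime) is sound, but the light-term estimate has a genuine gap that the proposed rescues cannot close. The inequality $\sum_{j=1}^{\alpha_p}\lceil k/p^j\rceil\le k/(p-1)+\alpha_p$ overcounts by one at \emph{every} level $j$ (indeed $\lceil k/q\rceil=\lfloor (k-1)/q\rfloor+1$ always), and these losses accumulate to $\psi_d(k-1)\approx k$ in the logarithmic sum; after dividing by $4\log(k-1)$ you are left with $T_L\le k/4+c\,k/\log k+O(1)$ for a positive absolute constant $c$. This excess over $(k+3)/4$ is not a boundary effect: with your stated bounds it is already about $2$ at $k=48$, and it grows without bound in $k$, so no numerical check at $k=48$ can help. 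The case distinction fails too: for $d=1$ you gain exactly $1$ by upgrading $T_L-1$ to $T_L$, which is less than the deficit; for $d>1$ the saving from excluding one prime $q\mid d$ is $k\log q/(4(q-1)\log k)$, which tends to $0$ when $q$ is large (e.g.\ $d=47$, $k=48$ saves about $0.26$ against a deficit above $2$).

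The missing idea is Erd\H{o}s's device, which is what the paper uses. For each prime $p\le k$ with $p\nmid d$, discard a \emph{single} index $i_p$ at which $\nu_p(m+id)$ is maximal. Then for every remaining index $i$ one has $\nu_p(m+id)\le\nu_p\bigl((m+id)-(m+i_pd)\bigr)=\nu_p(i-i_p)$, and since $\prod_{i\ne i_p}|i-i_p|=i_p!\,(k-1-i_p)!$ divides $(k-1)!$, the product $\Delta'$ of the surviving $k$-smooth terms (also omitting $i=0$) literally divides $(k-1)!$, with no additive error. The point is that the discarded index $i_p$ lies in the relevant residue class at \emph{every} level $j\le\alpha_p$, so each of your ``$+1$''s is absorbed by the same single deletion; your heavy terms play this role only at the top level $j=\alpha_p+1$. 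Comparing $(m+d)^{|J|}\le\Delta'\le(k-1)!<(k-1)^{k-1}$ with $m+d\ge(k-1)^4$ then gives $|J|<(k-1)/4$ cleanly, hence $w_1\ge 3(k-1)/4-\pi_d(k)$, and the stated bound on $w_0$ follows from $w_2\le\pi_d(k+1+2\sqrt{k})-\pi_d(k)$.
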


\begin{proof}
We follow standard arguments, going back to Erd\H{o}s. For similar results, see e.g. \cite{shantashorey1} and the references given there.

For each prime $p\leq k$ and $p\nmid d$, choose an index $i_p$ with $0\leq i_p<k$ such that
$$
\nu_p(m+i_p d)\geq \nu_p(m+id)\ \ \ (i=0,1,\dots,k-1).
$$
Put
$$
I=\{i_p\ :\ p\leq k,\ p\nmid d\},
$$
and write $J$ for the complement of $I\cup W_0\cup\{0\}$ in $\{0,1,\dots,k-1\}$. We clearly have $|J|\geq k-w_1-\pi_d(k)-1$. Let
$$
\Delta'=\prod\limits_{i\in J} (m+id),
$$
and observe that all prime divisors of $\Delta'$ is at most $k$, and also that $(\Delta',d)=1$. Let $p$ be any prime with $p\leq k$ and $p\nmid d$. Then for any $i=0,1,\dots,k-1$ we have
$$
\nu_p(m+id)\leq \nu_p(m+id-(m+i_pd))\leq \nu_p(i-i_p).
$$
This easily gives $\nu_p(\Delta')\leq \nu_p((k-1)!)$, implying
$\Delta'\mid (k-1)!$. Hence we get
$$
(m+d)^{k-w_1-\pi_d(k)-1}\leq (k-1)!.
$$
Now our assumption $m+d\geq (k-1)^4$ yields
$$
w_1\geq \frac{3(k-1)}{4}-\pi_d(k).
$$
Using $w_0=w_1-w_2$ and $w_2\leq \pi_d(k+1+2\sqrt{k})-\pi_d(k)$, the assertion follows.
\end{proof}

\section{Proof of Theorem \ref{theorem1}}

\begin{proof}[Proof of Theorem \ref{theorem1}]
If $k\leq 48$, then the statement is given by Lemma \ref{lemma3}. So we may assume that $k\geq 49$. We split the proof into two parts.

Suppose first that $d>1$, or $d=1$ and $m>k$. Then by Lemmas \ref{lemma4} and \ref{lemma5} we get that $k$ is bounded in terms of $N_\ell$ (and also in terms of $M_\ell$). Now if $m+d\leq (k-1)^4$, then we are done. Otherwise, Lemma \ref{lemma6} gives that
$$
w_0\geq \frac{3(k-1)}4-\pi_d(k+1+2\sqrt{k}).
$$
Now apart from at most $\pi_d(k)$ indices $i$, we have that $\nu_p(a_i)\leq \nu_p((k-1)!)$. (The exceptions are those indices $i_p$ for which $\nu_p(a_{i_p})$ is maximal.) This shows that if
\begin{equation}
\label{neweq}
\frac{3(k-1)}4-\pi_d(k+1+2\sqrt{k})-\pi_d(k)>1,
\end{equation}
then there are at least two indices $i\neq j$ such that all $a_i,a_j,x_i,x_j$ are bounded in terms of $N_\ell$ and $M_\ell$. As one of these indices, say $i$, is positive, by $m+d\leq m+id=a_ix_i$ we obtain that $m$ and $d$ are also bounded in terms of $N_\ell$ and $M_\ell$. A simple calculation based upon Lemma \ref{lemmapix} shows that \eqref{neweq} holds whenever $k\geq 62$. Then, working with the concrete values of the $\pi(x)$ function, we get that \eqref{neweq} holds in fact for $k\geq 42$. Hence the theorem follows in this case.

Assume next that $d=1$ and $m\leq k$. Then there exists an effectively computable constant $c_3=c_3(N_\ell)>0$ depending only on $N_\ell$ such that if $m+k-1>c_3(N_\ell)$, then the interval $\left(\frac{2}{3}(m+k-1),m+k-1\right)$ contains more than $N_\ell$ primes. Observe that by $m\leq k$ these primes are among $m,m+1,\dots,m+k-1$, and further that each of these primes divides exactly one of these numbers. Let $q$ be any of these primes, and write $q=m+i$. Observe that then by part (i) of Lemma \ref{lemma1}, $\gcd(B_{m+i},B_{m+j})=B_1=1$ for any $j\neq i$ with $0\leq j<k$. Hence $m+i\in {\mathcal P}_\ell(B)$. However, since we have more than $N_\ell$ primes among $m,\dots,m+k-1$, this yields a contradiction. Thus $m+k-1\leq c_3(N_\ell)$, and our claim follows also in this case.
\end{proof}

\section{An example}

Consider the elliptic curve $E\ :\ y^2+xy=x^3+x^2-7x+5$ and the elliptic divisibility sequence $B_n=(B_n)_{n=1}^\infty$ generated by the point $P=(2,-3)$. Reynolds \cite{reynolds} found the following perfect powers in $B_n$:
$$
B_1=B_2=B_3=B_4=B_7=1,\ B_{12}=2^7.
$$
Now we illustrate how our method works, assuming that there are no other perfect powers in $B_n$. (Note that once the set of all perfect powers is given, our method describes all solutions to \eqref{eq1}.)

Under the above assumption, we have
\[
\mathcal{P}_\ell(B)=\begin{cases}
\{1,2,3,4,7,12\}, & \mathrm{if}\ \ell=7;\\
\{1,2,3,4,7\}, & \mathrm{otherwise,}
\end{cases}
\]
and hence
\[
N_\ell=\begin{cases}
6, & \mathrm{if}\ \ell=7;\\
5, & \mathrm{otherwise;}\end{cases}
\qquad \mathrm{and}\qquad M_\ell =\begin{cases}
12, & \mathrm{if}\ \ell=7;\\
7, & \mathrm{otherwise.}\end{cases}
\]

Following the proof of Lemma \ref{lemma5}, by a simple calculation we get that for $k\geq 49$ we have $w_0\geq 1$. However, then by Lemma \ref{lemma4} we obtain that $k<M_\ell\leq 12$, a contradiction.

Hence we conclude that $k\leq 48$. Then following the proof of Lemma \ref{lemma3}, we get $m+d\leq 3M_\ell\leq 36$. As $m,d$ and $k$ are small, we can easily check all possibilities. (Note that for this we can work with the {\sl indices} and not with the {\sl terms} of $B_n$ themselves.)  We find that (under our assumption) the only solutions $(m,d,k,y)$ of equation \eqref{eq1} for arbitrary $\ell$ are given by
\[
(1,1,2,1),\ (1,1,3,1),\ (1,1,4,1),\ (1,2,2,1),\ (1,3,2,1),\ (1,3,3,1),\ (1,6,2,1),\ (2,1,2,1),
\]
\[
(2,1,3,1),\ (2,5,2,1),\ (3,1,2,1),\ (3,4,2,1),\ (4,3,2,1)\}
\]
and further, for $\ell=7$, we also have the solutions
\[
(1,11,2,2),\ (2,5,3,2),\ (7,5,2,2).
\]

\section*{Acknowledgements}
The authors are grateful to the referee for his/her useful comments on the paper.

\end{document}